\documentclass[]{article}
\usepackage[a4paper,scale=.75,centering]{geometry}  

\usepackage{lipsum}
\usepackage{tikz}
\usepackage{amsfonts}
\usepackage{amsmath}
\usepackage{amsthm}
\usepackage{mathtools}
\usepackage{caption}
\usepackage{enumerate} 
\usepackage{xcolor}
\usepackage{subfig}
\usepackage{hyperref}

\newcommand{\E}{\mathbb{E}}        
        
\newcommand{\NN}{\mathbb{N}}

\newcommand{\p}{\mathbb{P}}         
   
\newcommand{\I}{\mathbb{1}}   
\newcommand{\B}{\mathcal{B}}

\newtheorem{theorem}{Theorem}[section]
\newtheorem{proposition}{Proposition}[section]
\newtheorem{remark}{Remark}[section]
\newtheorem{definition}{Definition}[section]
\newtheorem{corollary}{Corollary}[theorem]

\title{Lookdown construction for a Moran seed-bank model}

\author{
	M.~C. Fittipaldi \footnote{Facultad de Ciencias,  
		Universidad Nacional Aut\'onoma de M\'exico, M\'exico. \href{mailto:mcfittipaldi@ciencias.unam.mx}{mcfittipaldi@ciencias.unam.mx} }
	\and
	A. Gonz{\'a}lez Casanova \footnote{Instituto de Matemáticas, Universidad Nacional Aut\'onoma de M\'exico, M\'exico and Department of Statistics, University of California Berkeley. \href{mailto:gonzalez.casanova@berkeley.edu}{gonzalez.casanova@berkeley.edu},
	\href{mailto:adrian.gonzalez@im.unam.mx}{adrian.gonzalez@im.unam.mx}}
	\and 
	J.~E. Nava \footnote{Facultad de Ciencias,  
		Universidad Nacional Aut\'onoma de M\'exico, M\'exico. 
		\href{mailto:julionava3133@ciencias.unam.mx}{julionava3133@ciencias.unam.mx} }}
	
	\begin{document}
		\maketitle
		\begin{abstract}
			We present a lookdown construction for a Moran seed-bank model with variable active and inactive population sizes and we show that the empirical measure of our model coincides with that of the Seed-Bank-Moran Model with latency of Greven, den Hollander and Oomen \cite{greven2022spatial}. 
			Furthermore, we prove that the time to the most recent common ancestor, starting from $N$ individuals with stationary  distribution over its state (active or inactive), has the same asymptotic order as the largest inactivity period. We then obtain an asymptotic distribution of the TMRCA, and use this result to find the first order of the asymptotic distribution of the fixation time of a single beneficial mutant conditioned to invade the whole population, which surprisingly is of order $\ln(N)$.
		\end{abstract}

		\section*{Introduction and main results}
		Seed-banks have been studied for their important effects in biology, but they are also of great interest from a mathematical point of view, since they can notably modify certain phenomena.  In the probabilistic community, efforts to study the effect of seed-banks rigorously can be traced back to the work of Kaj, Krone and Lascoux \cite{Kaj_Krone}. They postulated and studied an extension of the classical Wright-Fisher model which includes a seed-bank. The main difficulty of their approach is that the stochastic processes they study are not Markovian. Overcoming this (in a particular case) was achieved by Blath, Kurt, Wilke-Berenguer and one of the authors of this paper \cite{BGKW2016}. There, a new Wright-Fisher model with fully Markovian seed-bank was constructed. Moreover, the authors showed that the genealogy of this new model converges to the Seed-Bank coalescent. This process is a Kingman-type coalescent, where the lines of the coalescent can enter into latent state. As opposed to the Kingman coalescent, the seed-bank coalescent does not {\it comes down from infinity}. Also in \cite{BGKW2016} it was proved that the time to the most recent common ancestor of a sample of $N$ active individuals is of order $\ln(\ln(N))$. In \cite{lizz} the reader can find a detail description of the Seed-Bank coalescent.  
		
		A formidable tool to study coalescent processes is the {\it lookdown} construction by Donelly and Kurtz \cite{D&K_ClassicLD, DK1999}. This type of constructions allow to build Moran-type models where the lower $K$ levels form a Markov process, for every $K\in\mathbb N$.  This kind of constructions have played a prolific role in the study of many phenomena \cite{AE&K_GC, K&R_PRBP}, 
		Such representations are not known for seed-bank models and one main task of this paper is to fill this gap. 
		
		Our starting point is the Moran process with latency of Greven, den Hollander and Oomen, \cite{greven2022spatial}, in which active and dormant individuals are not two separate families of lines, but instead each line can change its state.
		
		In this work we present a lookdown construction for a Moran model with seed-bank with variable active population sizes and fixed total population size. This is a relaxation of assumption in \cite{BGKW2016} that the sizes of the active and dormant population are constant: we keep the total population size constant but allow the proportion of active (resp. dormant) individuals to vary over time, starting from a stationary distribution. We show that the empirical measure of our  SB-lookdown contruction and the empirical measure of the SB-Moran Model presented in \cite{greven2022spatial} coincide via the Markov Mapping theorem.

		Besides our novel lookdown process, we have two other main results. The first contribution is related to the time to the most recent common ancestor of the seed-bank coalescent. Thanks to \cite{BGKW2016}, it is known that time to the most recent common ancestor of the seed-bank coalescent starting from  $an$ active and $bn$ inactive individuals fulfills
		\begin{align*}
			\mathbb{E}\left[\widetilde{T}_{MRCA}[(an,bn)]\right] \asymp \ln(\ln(an) + bn).
		\end{align*}
		While this important result is concerned with expectations, we were able to find the distribution, up to its first order, of the time  to the most recent common ancestor $\widetilde{T}^{(n)}_{MRCA}$ starting from $n$ individuals with stationary state distribution. 
		
		To prove this result, we study the seed-bank coalescent as the ancestry process in the lookdown construction. This allowed us to compare $\widetilde{T}^{(n)}_{MRCA}$ with the largest period of inactivity of an individual, denoted by $\psi_n$, and to prove that they are of the same order.
		
		Finally, our last main result consists in finding the asymptotical distribution of the fixation time $\widetilde{\tau}^N$ of the type in the lower level for the $N$-lookdown model. The SB-lookdown construction allow us to compare the fixation time and the TMRCA, to show that 
		\begin{align*}
			\alpha\left(\widetilde{\tau}^N - \frac{\ln (Np)}{\alpha}\right)\xrightarrow[N\rightarrow \infty]{d} \widetilde{T},
		\end{align*}
		where $\widetilde{T} \sim Gumbel(0,1)$, and $p\coloneqq\frac{\alpha}{\alpha + \sigma}$ is the average time that a line is active.
		
		The remainder of the paper is structured as follows. In Section \ref{sec: SBlookdown} we construct the SB-lookdown and in Theorem \ref{thm:emp_eq} we show that the empirical measure of the SB-lookdown and SB-Moran Model coincide. In Section \ref{sec: TMRCA} we study the SB-coalescent as the ancestry process on the SB-lookdown, and in Theorem \ref{thm:line} we find the asymptotic distribution of the TMRCA. Finally, in Section \ref{sec : fixtime} we deduce the asymptotic distribution of the fixation time of the type in the first level in Corollary \ref{cor:fixation_time}.
		
		\section{Seed-Bank lookdown model}\label{sec: SBlookdown}
		Throughout this work we will use the framework of multitype Moran Seed-Bank model given by \cite[B.3]{greven2022spatial}, in which it was shown that the diffusion process as well as the ancestry processes associated with the Moran model coincide with those found in \cite{BGKW2016}. Let $N\in\NN$, the {\bf $N$-Moran Model with seed-bank}, or {\bf $N$-particle SB-Moran model}, describe a haploid population of $N$ individuals which evolve as follows. Each individual starts with a {\it type} in some set $E\subseteq \NN$ and a {\it state} in $S=\{a,d\}$ ({\it active} and {\it dormant} respectively), according with an exchangeable distribution. 
		Each active individual becomes inactive at rate $\alpha$ and each dormant individual becomes active at rate $\sigma$.
		Independently, a pair of individuals is uniformly chosen at rate $1$. The chosen individuals will interact if both are active.
		As a product of the interaction, one of them, chosen at random, will reproduce and its descendant will replace the other one. 
		We will denote the $N$-particle SB-Moran model at time $t\geq 0$ by $
		W^N(t)\coloneqq \left(W_1^N(t),\ldots,W_N^N(t)\right)$.
		Indeed, $W^N(t)$ is a random vector on $\left(E\times S\right)^{N}$.
		\begin{remark}
			Observe that the  dynamics mention above is completely symmetric. Therefore, given an initial exchangeable configuration, the SB-Moran model will be exchangeable.     
		\end{remark}
		We will now introduce an ordered particle model in the setting of \textit{classical} lookdown constructions \cite{D&K_ClassicLD} whose empirical measure distributions will agree with the SB-Moran model described above. Here, each particle will be attached to a level in $[N] = \{1,\ldots,N\}$. The population evolve as the SB-Moran Model except for the reproduction events, in which the parent will always be the individual with lower level involved in the reproduction event.		
		We will refer to this ordered particle model as the {\bf SB-lookdown model}, denoted at time $t\geq 0$ by    $ Z^N(t)\coloneqq \left(Z_1^N(t),\ldots,Z_N^N(t)\right)$,
		a random vector on $\left(E\times S\right)^{ N}$.

		In the following, we will formalize the lookdown construction and we will prove the equivalence between the ordered and unordered model via the Markov Mapping theorem \cite{AE&K_GC,Kurtz_mtg,K&R_PRBP}. 
		\subsection{Seed-Bank Lookdown construction}
		The first step in building the SB-lookdown model is to determine the initial activity and dormancy periods. 
		For each level $i \in \mathbb{N}$ we define
		\begin{equation*}
			\gamma^{i}(t):= y_0^ib_0^i + t +\sum\limits_{s_n \leq t}y_n =y_0^ib_0^i + t + \int_{0}^{t}\int_{0}^{\infty}y\mathcal{N}^{i}(ds,dy).
		\end{equation*}
		Here $y_0^i$ is an  exponential variable with parameter $\sigma$, $b_0^i$ is a Bernoulli variable with success probability $\sigma/\left(\alpha +\sigma\right)$, and   $\mathcal{N}^{i}(ds,dy)$ is a Poisson random measure with intensity measure $ \sigma ds \otimes \alpha e^{-\alpha y}dy,$ all of them independent of each other. Each pair $(s_n,y_n)$ indicates the start of a dormancy period at time $s_n$ with length $y_n$.

		Given $\tau^{i}(t) := \inf\{s:\gamma^{i}(s)>t\}$ the right-continuous inverse of $\gamma^{i}$, the activity and dormancy periods could be defined as
		\begin{equation*}
			a^{i} \coloneqq \{t \in \mathbb{R} \, : \quad  \tau^{i}(t) \neq \tau^{i}(s) \, \forall s \in \mathbb{R}\}
			\qquad \text{ and } \qquad 
			d^{i} \coloneqq  \mathbb{R}\setminus{a^{i}}.
		\end{equation*}

		Note that the state process for a given level only depends on itself. 
		Moreover, since the process starts form its stationary distribution we have that 
		\begin{align*}
			\p[t \in a^{i}] = \frac{\alpha}{\sigma+\alpha} =:p \qquad \mbox{for all} \quad  t>0, i \in \{1,\ldots,N\}.
		\end{align*} 
		The second part of this construction arises from considering the interactions among levels. 
		The times in which the individual on level $i$ tries to adopt the current type of an individual on the lower level $j$ are dictated by a Poisson process $\mathcal{C}^{i,j}$ with rate $1$, and the processes $\{\mathcal{C}^{i,j}\}_{j<i,i,j \in\mathbb{N}}$ are independent of each other. Hence, the type of the individual at level $i$ at time $t$ is determined by the last possible interaction with individuals at lower levels before that time. Namely, if  $(r_{n}^{ij})_{n \in \mathbb{N}}$ are the occurrence times of the process $\mathcal{C}^{i,j}$ and we set 
		\begin{align*}
			T_{t}^{ij}:= \sup_{n \in \mathbb{N}}\{r_{n}^{i,j}:r^{i,j}_{n} \in a^{i}\cap a^{j}, r_{n}^{i,j} \leq t \}
		\end{align*}
		the last interaction time between individuals on level $i$ and $j$ before time $t$.
		The type-state of the system at time $t$ is then provided by a function $g$ constructed in the following recursive way. 
		Given a random function $f:\mathbb{N} \rightarrow E$ which determine the initial type condition, we set  $g(1,t): = f(1)$.  For each level $i$,  given $g(1,t),\ldots,g(i-1,t)$,  we define
		\begin{align*}
			g(i,t): = f(i) \I_{\{T_{t}^i=0\}} + g(j,T^{i}_t) \I_{\{T_{t}^{i}  = T_{t}^{ij}\}},
		\end{align*}  
		where $T_{t}^{i}:= \sup \{T_{t}^{ij}, j<i\}$ is the last interaction time that change the type of the individual on level $i$. 
		Therefore, the state of the $N$-particle lookdown Moran seed-bank process at time $t$ is given by the vector  $(Z^N_{1}(t),\ldots,Z^N_{N}(t))$, where
		\begin{equation*}
			Z^N_i(t) \coloneqq \left(Z^{N,E}_i(t),Z^{N,S}_i(t)\right) = \left(g(i,t),a\I_{\{t \in a^{i}\}}+d\I_{\{t \in d^{i}\}}\right).
		\end{equation*}
		with $S = \{a,d\}$ the state space of each level. 
		The only differences in the dynamic of this process with the SB-Moran seed-bank model construction are found in the reproduction events. In the lookdown model, each individual choose his parent among the individuals in the lower levels, meanwhile in the  original SB model a pair of individuals is chosen to interact, and the parent is selected uniformly.

		\subsection{Equality in law of the SB-lookdown and SB-Moran Models} 
		In this (sub)section we will show that the empirical measure of the SB-lookdown and SB-Moran Model coincide. 
		\begin{theorem}
			\label{thm:emp_eq}
			The laws of empirical measures associated with the SB-Moran model and the SB-lookdown model coincide on  $D_{E\times S}[0,\infty)$.
		\end{theorem}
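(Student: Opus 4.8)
The plan is to apply the Markov Mapping Theorem (see \cite{Kurtz_mtg,AE&K_GC,K&R_PRBP}), which transports the ordered process $Z^N$ onto the empirical measure of the unordered model $W^N$. Introduce the two state spaces $E_1:=(E\times S)^{N}$, on which $Z^N$ lives, and $E_2:=\mathcal P_N(E\times S)$, the countable set of empirical measures of $N$ points of $E\times S$, together with the empirical measure map $\gamma^{N}\colon E_1\to E_2$, $\gamma^{N}(z):=\frac1N\sum_{i=1}^{N}\delta_{z_i}$. By the symmetry Remark, $W^N$ is exchangeable at all times, and since its jump rates, expressed in terms of $\gamma^N(W^N)$, do not see the labels, the process $\gamma^N(W^N)$ is Markov; write $B^N$ for its generator. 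Concretely $B^N$ is a bounded pure-jump generator on the countable space $E_2$ with two parts: (i) a switching part, moving one unit of mass from $(e,a)$ to $(e,d)$ at rate $\alpha$ per unit of mass there, and from $(e,d)$ to $(e,a)$ at rate $\sigma$; and (ii) a reproduction part, in which for every unordered pair of atoms both in state $a$, at rate $1$, one of the two --- chosen uniformly --- is replaced by a copy of the other. As $E_2$ is countable and the rates of $B^N$ are bounded, the martingale problem for $B^N$ is well posed.

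Next I would identify the generator $\mathcal A^N$ of $Z^N$. The construction through $\gamma^i$, $\mathcal N^i$ and $\mathcal C^{ij}$ realises $Z^N$ as the Markov jump process in which each level's state performs, independently across levels, a stationary two-state Markov chain with rates $\alpha$ for $a\to d$ and $\sigma$ for $d\to a$, and in which, for every $j<i$, at rate $1$, provided levels $i$ and $j$ are both active, the type at level $i$ is overwritten by the type at level $j$. I would then introduce the relabeling kernel $\alpha(\cdot\mid\mu)$ from $E_2$ to $E_1$: given $\mu=\frac1N\sum_{k=1}^{N}\delta_{x_k}$, let $\alpha(\cdot\mid\mu)$ be the law of $(x_{\pi(1)},\dots,x_{\pi(N)})$ for a uniformly random permutation $\pi$ of $\{1,\dots,N\}$. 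Since $\gamma^N$ equals $\mu$ throughout the support of $\alpha(\cdot\mid\mu)$, the kernel is a measurable section of $\gamma^N$, and $\int_{E_1}\phi(\gamma^N(z))\,\alpha(dz\mid\mu)=\phi(\mu)$ for every bounded $\phi\colon E_2\to\R$.

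The heart of the proof is the averaged intertwining identity
\begin{equation*}
	\int_{E_1}\mathcal A^N(\phi\circ\gamma^N)(z)\,\alpha(dz\mid\mu)=(B^N\phi)(\mu),\qquad \phi\colon E_2\to\R\ \text{bounded}.
\end{equation*}
The switching part of $\mathcal A^N$ acts level by level and is invariant under relabeling, so it descends verbatim to part (i) of $B^N$. The delicate point, and the place where the lookdown structure is genuinely used, is that $\mathcal A^N(\phi\circ\gamma^N)(z)$ is \emph{not} a function of $\gamma^N(z)$ alone: the rule ``the lower level is the parent'' depends on the ordering of $z$. Averaging over a uniform ordering cures this, because for any two distinct active levels each is equally likely to be the lower one; hence the deterministic parent rule of $Z^N$ becomes, after averaging, the uniform parent rule of $W^N$, and a short term-by-term computation over the pairs of active atoms turns the reproduction part of $\mathcal A^N$ into part (ii) of $B^N$. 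Finally, the initial law of $Z^N$ is exchangeable, hence of the form $\int_{E_2}\alpha(\cdot\mid\mu)\,\nu(d\mu)$ with $\nu$ the law of $\gamma^N(Z^N(0))$, which we take equal to the law of $\gamma^N(W^N(0))$.

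With the martingale problem for $\mathcal A^N$ solved by $Z^N$, the averaged generator equal to the well-posed $B^N$, and $\alpha$-compatible initial laws, the Markov Mapping Theorem yields that $\gamma^N(Z^N)$ solves the martingale problem for $B^N$; by uniqueness it then has the same law as $\gamma^N(W^N)$ on path space, which is the assertion. I expect the main obstacle to be precisely the failure of the naive deterministic intertwining: one cannot simply push $\mathcal A^N$ forward along $\gamma^N$, so the argument must be run with the randomised relabeling kernel, and the crux is verifying that order-averaging collapses the lookdown reproduction generator onto the symmetric Moran one. The remaining hypotheses of the Markov Mapping Theorem --- boundedness of rates, measurability, and well-posedness of the target martingale problem --- are routine here, since all the processes involved are pure-jump with bounded rates on countable, hence Polish, spaces.
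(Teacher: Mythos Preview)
Your proposal is correct and follows essentially the same route as the paper: apply the Markov Mapping Theorem with the uniform-permutation kernel and verify that, after averaging over orderings, the lookdown reproduction generator collapses to the symmetric Moran one (the switching part being manifestly symmetric). The paper carries this out by computing $\widehat{A^{N}_{LD}f}$ and $\widehat{A^{N}_{M}f}$ for arbitrary $f$ and checking they agree term by term, whereas you phrase it as an intertwining $\int \mathcal A^{N}(\phi\circ\gamma^{N})\,\alpha(dz\mid\mu)=B^{N}\phi(\mu)$ for the empirical-measure generator $B^{N}$; the underlying computation is the same.
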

		The classical way to prove law equalities for lookdown constructions is via some coupling of processes (\cite[Section 2]{DK1999} ), but we will use the Markov Mapping theorem \cite[Theorem A.15]{K&R_PRBP}. To this end, let us introduce the generators of the empirical measures of the above mentioned processes. Let $\B\left((E\times S)^N\right)$ be the space  of  bounded measurable functions on $(E\times S)^N$. 
		Given a configuration $z\in \left(E\times S\right)^N$ and $f\in\B\left((E\times S)^N\right)$, we write 
		\begin{align}
			\label{gen:emp_M}
			A^{N}_Mf(z) \coloneqq   \sigma \sum_{i=1}^N  \left(
			f\left(\phi_{i}^d(z)\right)-f(z) 
			\right) + \alpha  \sum_{i=1}^N \left(
			f\left(\phi_{i}^a(z)\right)-f(z) 
			\right) + \frac{1}{2}\sum_{i\neq j }   \left(f(\phi_{ij}(z))-f(z) \right)
		\end{align}
		and  
		\begin{align}
			\label{gen:emp_LD}
			A^{N}_{LD}f(z) \coloneqq & \sigma \sum_{i=1}^N \left(
			f\left(\phi_{i}^d(z)\right)-f(z) 
			\right) + \alpha  \sum_{i=1}^N \left(
			f\left(\phi_{i}^a(z)\right)-f(z) 
			\right)  + \sum_{i< j } \left(f(\phi_{ij}(z))-f(z) \right)
		\end{align} 
		which correspond to the infinitesimal generator of the unordered and ordered models respectively. 
		Here, $\phi_{i,j}(z)$ corresponds to replace $z_j$ by $z_i$ only if $z_{i}^S = z_{j}^S=a$; $\phi_{i}^d(z)$ corresponds to replace $z_{i}^S$ by $d$; and $\phi_{i}^a(z)$ corresponds to replace  $z_{j}^S$ by $a$. 
		
		Let $\mathrm{z}_N$ be the empirical measure associated to $z$, given by $\mathrm{z}_N := \frac{1}{N}\sum\limits_{i=1}^{N} \delta_{z_i}$,
		which lives in the space $\mathcal{P}(E\times S)$ of probability measures in $E\times S$.
		For any  $f\in\B\left((E\times S)^N\right)$ we define 
		\begin{align}
			\label{op:avg}
			\widehat{f}(\mathrm{z}_N) := \frac{1}{N!} \sum_{\pi}f(z_\pi),
		\end{align}
		which correspond to  the uniform average out of all the permutations of  a given configuration such that its empirical measure is $\mathrm{z}_N$. 
		\subsubsection{Proof of Theorem \ref{thm:emp_eq}.}
			Since   
			\begin{align*}
				|A^N_{M}f(z)|\leq \left(2\sigma+2\alpha +N \right)N||f|| +1\quad \mbox{ and } \quad  |A^N_{LD}f(z)|\leq \left(2\sigma+2\alpha +N \right)N||f||+1
			\end{align*}
			for all $z\in \left(E\times S\right)^N$, with $||f|| = \sup_{z\in(E\times S)^N}|f(z)|$, to apply the Markov mapping theorem  it is enough to prove that the infinitesimal generators defined in equation \eqref{gen:emp_M} and equation \eqref{gen:emp_LD} coincide under the average operator given by \eqref{op:avg}.
			To this end, note that the terms correspondent to activation and dormancy events coincide. 
			First, for the deactivation mechanism we have that 
			\begin{align*}
				\frac{1}{N!}& \sum_{\pi}\sigma \sum_{i=1}^N  \left(
				f\left(\phi_{i}^d(z_\pi)\right)-f(z_\pi)  
				\right) =  \frac{1}{N!} \sum_{\pi}\sigma \sum_{i=1}^N  \left(
				f\left(\phi_{\pi^{-1}_i}^d(z)_\pi\right)-f(z_\pi)  
				\right) \\
				= & \frac{1}{N!} \sum_{\pi}\sigma \sum_{i=1}^N  \left(
				f\left(\phi_{i}^d(z)_\pi\right)-f(z_\pi)  
				\right)
				= \sigma \sum_{i=1}^N \left(
				\frac{1}{N!} \sum_{\pi}  f\left(\phi_{i}^d(z)_\pi\right)- \frac{1}{N!} \sum_{\pi}f(z_{\pi})  
				\right) \\
				=& \sigma  \sum_{i=1}^N \left(\widehat{f}\left(\mathrm{z}_N+ N^{-1}(\delta_{\phi_{i}^d(z)_i} -\delta_{z_i} )\right)-\widehat{f}(\mathrm{z}_N)\right),
			\end{align*}  
			and similarly the activation mechanism has the form
			\begin{align*}
				\frac{1}{N!} \sum_{\pi} \alpha  \sum_{i=1}^N \left(
				f\left(\phi_{i}^a(z_\pi)\right)-f(z_\pi)  
				\right) = & \alpha \sum_{i=1}^N \left(\widehat{f}\left(\mathrm{z}_N+ N^{-1}(\delta_{\phi_{i}^a(z)_i} -\delta_{z_i} )\right)-\widehat{f}(\mathrm{z}_N)\right).
			\end{align*}
			
			Finally, for the reproduction mechanism we have that 
			\begin{align*}
				\frac{1}{N!}&\sum_{\pi}\frac{1}{2} \sum_{i\neq j }  \left(f(\phi_{ij}(z_\pi))-f(z_\pi) \right) = \frac{1}{2} \sum_{i\neq j} \left(\widehat{f}(\mathrm{z}_N+N^{-1}\left(\delta_{z_i}-\delta_{z_j}\right))-\widehat{f}(\mathrm{z}_N)\right)\\
				=&  \frac{1}{2} \sum_{i< j}\left(\widehat{f}(\mathrm{z}_N+N^{-1}\left(\delta_{z_i}-\delta_{z_j}\right))-\widehat{f}(\mathrm{z}_N)\right)
				+  \frac{1}{2} \sum_{i>j}\left(\widehat{f}(\mathrm{z}_N+N^{-1}\left(\delta_{z_i}-\delta_{z_j}\right))-\widehat{f}(\mathrm{z}_N)\right)\\
				=& \sum_{i< j}\left(\widehat{f}(\mathrm{z}_N+N^{-1}\left(\delta_{z_i}-\delta_{z_j}\right))-\widehat{f}(\mathrm{z}_N)\right),
			\end{align*}
			and we conclude that 
			\begin{align*}
				A^N\widehat{f}(\mathrm{z}_N)  \coloneqq &\sigma  \sum_{i=1}^N \left(\widehat{f}\left(\mathrm{z}_N+ N^{-1}(\delta_{\phi_{i}^d(z)_i} -\delta_{z_i} )\right)-\widehat{f}(\mathrm{z}_N)\right)\\
				&+ \alpha \sum_{i=1}^N \left(\widehat{f}\left(\mathrm{z}_N
				+ N^{-1}(\delta_{\phi_{i}^a(z)_i} -\delta_{z_i} )\right)-\widehat{f}(\mathrm{z}_N)\right)\\
				& +\frac{1}{2} \sum_{i\neq j} \left(\widehat{f}(\mathrm{z}_N+N^{-1}\left(\delta_{z_i}-\delta_{z_j}\right))-\widehat{f}(\mathrm{z}_N)\right) \\
				= & \widehat{A^N_{LD}f(z)}=\widehat{A^N_{M}f(z)}.
			\end{align*}		
			 \hfill \qedsymbol	
		\section{Time to the most recent common ancestor} \label{sec: TMRCA}
		\subsection{  SB-coalescent process as the ancestry process of a SB-lookdown particle system}
		Let's recover the notation used for the Seed-bank coalescent on \cite{BGKW2016}.
		For $k\geq 1$ define $\mathcal{P}_{k}$ the set of partitions of $[k]$ and the set of marked partitions of $[k]$ as \begin{align*}
			\mathcal{P}_k^S=\{(\zeta,\overset{\rightarrow}{u}) | \zeta \in \mathcal{P}_k, \, \overset{\rightarrow}{u} \in S^{|\zeta|} \}.
		\end{align*} 
		If $\pi,\pi'\in\mathcal{P}_k^S$, we denote $\pi\succ \pi'$, if $\pi'$ can be obtained by merging exactly two blocks carrying the $a$-flag of $\pi$, and the resulting block also carries the $a$-flag. In a similar way, we denote $\pi\bowtie\pi'$ if $\pi$ can be constructed by changing the flag of precisely one block of $\pi$. 
		\begin{definition}[The Seed-bank $k$-coalescent]
			For $k\geq 2$ and $\alpha,\sigma\in (0,\infty)$. The seed-bank $k$-coalescent $\left(\Pi_t^{k}\right)_{t\geq 0}$ with seed-bank intensity $\alpha$ and relativity seed-bank size $\sigma$ is defined to be the continuous time Markov chain with values in $\mathcal{P}^{S}$, with the following transition rates:
			\begin{align*}
				\pi \mapsto \pi'  \text{ at rate } \begin{cases}
					1, & \text{if } \pi\succ \pi',\\
					\alpha, & \text{if } \pi \bowtie \pi' \text{ and one $a$-flag is replaced by one $d$-flag},\\
					\sigma, & \text{if } \pi \bowtie \pi' \text{ and one $d$-flag is replaced by one $a$-flag}.
				\end{cases}.
			\end{align*}			     
		\end{definition}
		Related to this coalescent we have, $\{(N_t,M_t)\}_{t\geq 0 }$ the block counting process associated to the seed-bank coalescent. Lets show that the ancestry process of a sample of individuals in the $N$-SB-lookdown model correspond with the SB-coalescent. 
		Suppose we sample $k$ individuals at a certain time $T\geq 0$. The initial condition correspond with the singletons of the sampled individuals levels marked according with its state at time $T$,  and we will recover its genealogical information tracing the particle system backwards in time. For each time $t\geq 0$, each block flag will correspond with the state of the lower individual level in the block at time $T-t$. Therefore, it is immediately noticeable that each block with $a$-flag turns into a $d$-flag block at rate $\sigma$ and conversely at rate $\alpha$. Similarly, coalescence events will correspond to the reproductions events between the lower individuals levels on each block.
		
		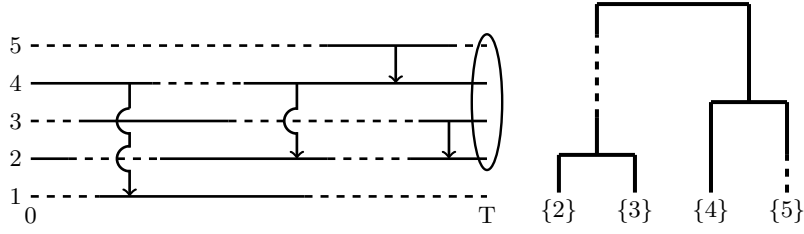
\begin{figure}[h]
			\centering
			\begin{subfloat}{
					\centering
					\begin{tikzpicture}[scale = 1]
						\draw[line width = 1pt,dashed] (-3,) -- (0.9,1);
						\draw[line width = 1pt] (0.9,1) -- (2.5,1);
						\draw[line width = 1pt,dashed] (2.5,1) -- (3,1);
						\draw[line width = 1pt] (-3,0.5) -- (-1.4,0.5);
						\draw[line width = 1pt,dashed] (-1.4,0.5) -- (-0.2,0.5);
						\draw[line width = 1pt] (-0.2,0.5) -- (3,0.5);
						\draw[line width = 1pt,dashed] (-3,0) -- (-2.3,0);
						\draw[line width = 1pt] (-2.3,0) -- (-0.4,0);
						\draw[line width = 1pt,dashed] (-0.4,0) -- (2.1,0);
						\draw[line width = 1pt] (2.1,0) -- (3,0);
						\draw[line width = 1pt] (-3,-0.5) -- (-2.5,-0.5);
						\draw[line width = 1pt,dashed] (-2.5,-0.5) -- (-1.3,-0.5);
						\draw[line width = 1pt] (-1.3,-0.5) -- (0.9,-0.5);
						\draw[line width = 1pt,dashed] (0.9,-0.5) -- (2,-0.5);
						\draw[line width = 1pt] (2,-0.5) -- (3,-0.5);
						\draw[line width = 1pt,dashed] (-3,-1) -- (-2.1,-1);
						\draw[line width = 1pt] (-2.1,-1) -- (0.6,-1);
						\draw[line width = 1pt,dashed] (0.6,-1) -- (3,-1);
						\draw[line width = 1pt] (0.5,0.5-2/6) arc (90:270:1/6);
						\draw[line width = 1pt] (0.5,0.5) -- (0.5,0.5-2/6-0.02);
						\draw[line width = 1pt,-to] (0.5,0.5-4/6+0.02) -- (0.5,-0.5);
						\draw[line width = 1pt] (-1.7,0.5-2/6) arc (90:270:1/6);
						\draw[line width = 1pt] (-1.7,-2/6) arc (90:270:1/6);
						\draw[line width = 1pt] (-1.7,-1/6+0.02) -- (-1.7,-2/6-0.02);
						\draw[line width = 1pt] (-1.7,0.5-2/6) -- (-1.7,0.5);
						\draw[line width = 1pt,-to] (-1.7,-4/6+0.02) -- (-1.7,-1);
						\draw[line width = 1pt,-to] (1.8,1) -- (1.8,0.5);
						\draw[line width = 1pt,-to] (2.5,0) -- (2.5,-0.5);
						\draw[line width= 0.9pt] (3,0.25) ellipse (0.2 and 0.9);
						
						\node at (-3.2,-1) (l1) {\small{1}};
						\node at (-3.2,-0.5) (l2) {\small{2}};
						\node at (-3.2,0) (l1) {\small{3}};
						\node at (-3.2,0.5) (l1) {\small{4}};
						\node at (-3.2,1) (l1) {\small{5}};
						\node at (-3,-1.25) (0) {\small{0}};
						\node at (3,-1.25) (T) {\small{T}};
					\end{tikzpicture}
				}
			\end{subfloat}
			\begin{subfloat}{
					\centering
					\begin{tikzpicture}[scale = 1]
						\draw[line width= 1.5pt] (5,-1.25) -- (5,-0.75);
						\draw[line width= 1.5pt] (6,-1.25) -- (6,-0.75);
						\draw[line width= 1.5pt] (7,-1.25) -- (7,-0.05);
						\draw[line width= 1.5pt,dashed] (8,-1.25) -- (8,-0.75);
						\draw[line width= 1.5pt] (8,-0.75) -- (8,-0.05);
						
						\draw[line width= 1.5pt] (5,-0.75) -- (6,-0.75);
						
						\draw[line width= 1.5pt] (7,-0.05) -- (8,-0.05);
						
						\draw[line width= 1.5pt] (5.5,-0.75) -- (5.5,0.-0.25);
						\draw[line width= 1.5pt,dashed] (5.5,-0.25) -- (5.5,0.85);
						\draw[line width= 1.5pt] (5.5,0.85) -- (5.5,1.25);
						\draw[line width= 1.5pt] (7.5,-0.05) -- (7.5,1.25);
						\draw[line width= 1.5pt] (5.5,1.25) -- (7.5,1.25);
						\node at (5,-1.5) (s2) {\small{\{2\} }};
						\node at (6,-1.5) (s2) {\small{\{3\}}};
						\node at (7,-1.5) (s2) {\small{\{4\}}};
						\node at (8,-1.5) (s2) {\small{\{5\}}};
					\end{tikzpicture}
				}
			\end{subfloat}
			\caption{\small{An illustration of the $5$-particle lookdown model and the ancestry process of a sample of individuals in levels $\{2,\ldots,5\}$ at time T. Straight lines indicates activity periods meanwhile dashed ones indicates the dormancy periods.}} 
			\label{fig:my_label}
		\end{figure}
		
		In this context,  we present the following result which will be helpful later and  it strongly relies in the lookdown construction presented before.   
		\begin{proposition}[First coalition time with k lower levels.]\label{prop:first_rep_event}
			Let $i,k \in \NN$, such that $k < i$. The first reproduction event between the individual at level $i$ with any of the individuals at first $k$ lower levels is given by
			\begin{align*}
				r^{i}_{k} \coloneqq \inf\left\{r_{n}^{ij}\text{ $|$ } r_{n}^{ij}\in A^{i} \cap A^{j},\,j\in\{1,\ldots,k\},\, n\in\NN\right\}
			\end{align*} 
			and satisfies 
			\begin{align*}
				r_k^i \sim \exp \left(k\left(\frac{\alpha}{\alpha+\sigma}\right)^2\right).
			\end{align*}
		\end{proposition}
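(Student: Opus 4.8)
The plan is to integrate out the state processes and use that, conditionally on them, the reproduction events form a Poisson process. Recall from the construction that the potential-interaction processes $\{\mathcal{C}^{i,j}\}_{j<i}$ are independent unit-rate Poisson processes, jointly independent of the family of state processes $\{a^\ell\}_{\ell}$, and that the state processes of distinct levels are mutually independent with $\p[s\in a^\ell]=p$ for every $s$. Fix $i$ and condition on $\mathcal{G}:=\sigma(a^i,a^1,\dots,a^k)$. By the restriction theorem for Poisson processes, each thinned process $\mathcal{C}^{i,j}\cap(a^i\cap a^j)$ is, given $\mathcal{G}$, a Poisson process with intensity $\I_{\{s\in a^i\cap a^j\}}\,ds$; by superposition, the collection of actual reproduction events between level $i$ and levels $1,\dots,k$ is, given $\mathcal{G}$, a Poisson process with the conditionally deterministic intensity
\[
\Lambda(ds)=\I_{\{s\in a^i\}}\Big(\sum_{j=1}^{k}\I_{\{s\in a^j\}}\Big)\,ds.
\]
Hence, writing $L_t:=\Lambda([0,t])=\sum_{j=1}^{k}\mathrm{Leb}\big((a^i\cap a^j)\cap[0,t]\big)$, we get $\p[r_k^i>t\mid\mathcal{G}]=e^{-L_t}$ and therefore $\p[r_k^i>t]=\E[e^{-L_t}]$.

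It remains to identify $\E[e^{-L_t}]$ with $e^{-kp^2t}$. I would do this by differentiating $u(t):=\E[e^{-L_t}]$ and invoking the Markov property of the joint state vector $(a^i(s),a^1(s),\dots,a^k(s))_{s\ge0}$, a finite continuous-time chain with product-stationary law: by exchangeability of the $k$ lower levels this reduces $u$ to a small linear ODE system for conditional survival probabilities indexed by the pair $\big(\text{state of level }i,\ \#\{\text{active lower levels}\}\big)$. One then checks that the weighted combination of these coordinates equal to $u$ closes up into $u'(t)=-kp^2u(t)$ with $u(0)=1$; here the balance identity $p(\alpha+\sigma)=\alpha$ (equivalently $p\sigma=(1-p)\alpha$) is what should make the cross-terms cancel.

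The step I expect to be the real obstacle is exactly this collapse. A priori $L_t$ is a genuinely random occupation time, so Jensen's inequality only gives $\E[e^{-L_t}]\ge e^{-\E[L_t]}=e^{-kp^2t}$, with equality not automatic: although $u'(0)=-kp^2$ is immediate from $\E[L_t]=kp^2t$, the full identity $u'\equiv-kp^2u$ must be forced by the precise activation and deactivation rates together with the independence across levels, and one should be prepared for the clean exponential law to hold only after conditioning further on the trajectory of $a^i$, or only up to the order of magnitude. In the latter case the fallback — which is in any event what is used downstream to compare $\widetilde{T}^{(n)}_{MRCA}$ with the longest dormancy period $\psi_n$ — is to keep the representation $\p[r_k^i>t]=\E[e^{-L_t}]$ and control $r_k^i$ from above and below using $L_t\le kt$ together with a lower bound on $L_t$ valid once level $i$ has been active long enough.
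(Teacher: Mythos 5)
Your first step is correct and is, in fact, the right way to set this problem up: conditioning on the state processes and using restriction plus superposition gives $\p[r_k^i>t]=\E[e^{-L_t}]$ with $L_t=\sum_{j\le k}\mathrm{Leb}\bigl((a^i\cap a^j)\cap[0,t]\bigr)$. The obstacle you flag in the second step is not a technicality, however: the collapse fails, and the stated exponential law does not hold as an exact distributional identity. The quickest way to see this is your own Jensen remark pushed one step further: $L_t$ is not almost surely constant (it vanishes on the positive-probability event that level $i$ is dormant throughout $[0,t]$, while $\E[L_t]=kp^2t>0$), so strict convexity gives $\p[r^i_k>t]=\E[e^{-L_t}]>e^{-kp^2t}$ for every $t>0$. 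Carrying out your ODE program confirms this: with $M_t\coloneqq\I_{\{t\in a^i\}}\,\#\{j\le k:\, t\in a^j\}$ and $v(t)\coloneqq\E[e^{-L_t}M_t]$ one has $u'=-v$ and $v(0)=kp^2$, but $\tfrac{d}{dt}\bigl(v-kp^2u\bigr)(0)=-\mathrm{Var}(M_0)<0$, so $v(t)<kp^2u(t)$ for small $t$ and the system does not close into $u'=-kp^2u$. Thus $r^i_k$ strictly stochastically dominates an $\exp(kp^2)$ variable; the exponential is only a lower bound for $r^i_k$, recovered exactly only in the fast-switching limit.

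The paper's own proof goes a different route and stumbles precisely where you predicted trouble: it superposes the $k$ clocks into a rate-$k$ Poisson process and asserts that the number $K$ of arrivals until the first success is $Geo(p^2)$ and independent of the arrival times. The success indicators $\I_{\{r^{ij}_n\in a^i\cap a^j\}}$ do each have marginal probability $p^2$, but they are positively correlated across $n$ (nearby arrivals see correlated states, because the state processes have exponential holding times) and they are not independent of the arrival times, so $K$ is neither geometric nor independent of $\widetilde{C}^{i,k}$. Your fallback --- retaining the representation $\E[e^{-L_t}]$ and proving two-sided bounds via $L_t\le kt$ together with concentration of $L_t$ around $kp^2t$ --- is the correct repair. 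Note that such a repair is genuinely needed downstream: the proof of Theorem \ref{thm:line} uses $\p[z<r^i_{\ln np}]\le e^{-(\ln np)p^2z}$, which is exactly the direction of inequality that the true law fails to satisfy; one would instead need an upper bound of the form $\E[e^{-L_z}]\le\p[L_z\le\lambda z]+e^{-\lambda z}$ for some $\lambda<kp^2$, obtained from concentration of the occupation time.
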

		\begin{proof}
			Since the state process $Z^{N,S}_i$ starts from its stationary distribution recall that 
			\begin{align*}
				\mathbb{P}[r_{n}^{ij} \in A^{i}] = p =\frac{\alpha}{\alpha+ \sigma }, \text{ $\forall n \in \mathbb{N}$} 
			\end{align*}
			and by the independence between levels, 
			\begin{align}
				\label{proba_coal}
				\mathbb{P}[r_{n}^{ij} \in A^{i} \cap A^{j}] =  \mathbb{P}[r_{n}^{ij} \in A^{i}]\mathbb{P}[r_{n}^{ij} \in A^{j}] =  \left( \frac{\alpha}{\alpha+\sigma} \right)^{2} =p^2
			\end{align}
			for $j \in \{1,\ldots,k\}$.	Let $\widetilde{C}^{i,k}\coloneqq \sum\limits_{j=1}^{k} C^{i,j}$ be a Poisson process of rate $k$ by the superposition of Poisson Processes. Given $K$ be the number of occurrences of $\widetilde{C}^{i,k}$ until the first successful reproduction then we have that $K\sim Geo\left(p^2\right)$. From here, we conclude that 
			\begin{align*}
				\mathbb{P}[r^{i}_k\leq t] & = \sum_{n = 1}^{\infty} \mathbb{P}[K =n]\mathbb{P}\left[\widetilde{C}^{i,k}_t \geq n \right] = \sum_{n =1}^{\infty}  p^2(1-p^2)^{n-1} \int_{0}^{t}k \frac{1}{(n-1)!}(ks)^{n-1}e^{-ks}ds \\
				& = p^2k\int_{0}^{t}e^{-ks}\sum_{n=1}^{\infty}\frac{1}{(n-1)!}((1-p^2)ks)^{n-1}ds \\
				& = p^2k \int_{0}^{t}e^{-ks}e^{(1-p^2)ks}ds = 1-e^{-kp^2t}.
			\end{align*}  
		\end{proof}
		\begin{remark}~
			\label{exp_remarks}
			Note that the coalescence between the block that contain $i$ and the one that contain $j$ could be achieved without a reproduction event between their corresponding levels. This implies that if $\tau^i_k$ is the coalescent time of the block $\{i\}$ with any of the ancestral lines of the first $k$ individuals, then $\tau^i_k$ is bounded from above by $r_k^i$ almost surely.       
		\end{remark}
		
		\subsection{Asymptotic distribution of the time to the most recent common ancestor}
		We will study the long time behavior of the variable 
		\begin{align*}
			\widetilde{T}_{MRCA}[n] = \inf\left\{t\geq 0\,:\,N_t+M_t=1| N_0+M_0 = n\right\},
		\end{align*} using the framework developed in \cite{BGKW2016}. In particular, the authors define a stopping time $\varrho_n$ correspond to {\it ``the first time that all the $n$ initial individuals which so far had not entered the seed-bank have coalesced"}.
		We can bound the number of lineages left at $\varrho_n$ by  the number $B_n$ of first activation periods, that is
		\begin{align*}
			N_{\varrho_n}+M_{\varrho_n} \leq M_0 + \sum\limits_{i=2}^{N_0} \delta_i \leq  M_0 + \mathcal{B}_n,
		\end{align*}
		where  $\mathcal{B}_n:=\sum\limits_{i=2}^n \delta_i$, with $\delta_i \sim Be(2\sigma/(2\sigma +(i-1))$
		and are independent of each other, and also independent of $M_0$.  Note that
		
		\begin{equation} \label{eq: espvar}
			\begin{split}
				\E[M_0 + \mathcal{B}_n]&= pn +  \sum\limits_{i=2}^n\frac{2\sigma}{2\sigma+(i-1)}=pn + 2\sigma \ln n.+R(\sigma,n), 
				\qquad \mbox{and} \\
				\mathbb{V}(M_0 + \mathcal{B}_n) & = p(1-p)n + \sum\limits_{i=2}^n\frac{2\sigma}{2\sigma+(i-1)}\left(1-\frac{2\sigma}{2\sigma+(i-1)}\right)\leq E[M_0 + \mathcal{B}_n]
			\end{split}
		\end{equation}	
		with $R(\sigma,n)$ a function which converge to a finite value dependent of $\sigma$ as $n$ goes to infinity.
		Let $\psi_n$ be {\bf the largest inactivity period in the SB-coalescent process}, defined by
		\begin{equation}\label{eq: distpsin}
			\psi_n: = \max_{0\leq i\leq n}\{\delta_i \widetilde{\xi}_i\} \overset{d}{=} \max_{0\leq i\leq M_0 + \mathcal{B}_n} \{\xi_i\},
		\end{equation} 
		where the activation periods $\widetilde{\xi}_i,\ \xi_i\sim \exp(\alpha)$ are i.i.d.. We present some properties for $\psi_n$ below.
		\begin{proposition}	\label{prop_ldream_beh}
			\begin{enumerate}
				\item[]
				\item[a)] 
				Asymptotic distribution. 
				\begin{align} \label{eq: adpsi}
					\alpha\left(\psi_{n}-\frac{\ln np}{\alpha}\right)\xrightarrow[n\rightarrow \infty]{d} \widetilde{T},
				\end{align}
				where $\widetilde{T} \sim Gumbel(0,1)$.
				\item[]  
				\item[b)] Given $0<r<\frac{1}{\alpha}$, it holds that 
				$\quad \lim\limits_{n\rightarrow \infty} \p\left[\psi_{n} \leq r\ln np  \right] = 0.$ 
				\item[]  
				\item[c)]  The sequence $\{\psi_n\}_{n \in \NN}$ almost surely diverges.
			\end{enumerate}
		\end{proposition}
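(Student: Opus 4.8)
The plan is to reduce all three statements to the classical extreme-value asymptotics for i.i.d.\ exponentials via the representation $\psi_n\overset{d}{=}\max_{0\le i\le K_n}\{\xi_i\}$ from \eqref{eq: distpsin}, where $K_n:=M_0+\mathcal B_n$, the variables $\xi_i\sim\exp(\alpha)$ are i.i.d.\ and independent of $K_n$, combined with the first two moments of $K_n$ recorded in \eqref{eq: espvar}. The single probabilistic input I would establish first is that $K_n/(np)\to 1$ in probability: indeed $\E[K_n]/(np)=1+(2\sigma\ln n+R(\sigma,n))/(np)\to 1$, while $\mathbb V\!\left(K_n/(np)\right)\le \E[K_n]/(np)^2\to 0$, so Chebyshev's inequality applies. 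After this, the argument is deterministic analysis carried out by conditioning on $K_n$.

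For part (a), fix $t\in\R$. The event $\{\alpha(\psi_n-\tfrac{\ln np}{\alpha})\le t\}$ equals $\{\psi_n\le (t+\ln np)/\alpha\}$, and conditioning on $K_n$ with $\p[\max_{0\le i\le m}\xi_i\le y]=(1-e^{-\alpha y})^{m+1}$ (valid for $y\ge 0$, which holds once $n$ is large) gives
\[
\p\!\left[\alpha\Big(\psi_n-\tfrac{\ln np}{\alpha}\Big)\le t\right]=\E\!\left[\Big(1-\tfrac{e^{-t}}{np}\Big)^{K_n+1}\right].
\]
Writing the integrand as $\exp\big((K_n+1)\ln(1-e^{-t}/(np))\big)$ and using $K_n/(np)\to 1$ in probability together with $np\to\infty$, the exponent converges in probability to $-e^{-t}$; since the integrand lies in $[0,1]$, dominated convergence yields the limit $e^{-e^{-t}}$, which is the $\mathrm{Gumbel}(0,1)$ distribution function, and as this limit is continuous in $t$ we obtain convergence in distribution. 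This interchange of limit and expectation is the only non-routine point: it is exactly here that the in-probability convergence of $K_n/(np)$ is used, and no tail estimate on $K_n$ is needed because the integrand is automatically bounded; one must only keep $t$ fixed while $np\to\infty$ so that the expansion of $\ln(1-e^{-t}/(np))$ is legitimate, and note that replacing $K_n+1$ by $K_n$ leaves the limit unchanged.

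For part (b), fix $0<r<1/\alpha$. The same conditioning gives $\p[\psi_n\le r\ln np]=\E[(1-(np)^{-\alpha r})^{K_n+1}]$. On $\{K_n\ge np/2\}$, an event of probability tending to $1$, the integrand is at most $\exp\!\big(-\tfrac12(np)^{1-\alpha r}\big)$, which tends to $0$ since $1-\alpha r>0$; on the complementary (asymptotically negligible) event it is bounded by $1$. Hence $\p[\psi_n\le r\ln np]\to 0$.

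For part (c), I would use the pathwise representation $\psi_n=\max_{0\le i\le n}\{\delta_i\widetilde\xi_i\}$, which is nondecreasing in $n$ and therefore converges almost surely to some $\psi_\infty\in[0,\infty]$. For a fixed $M>0$ the events $\{\delta_i\widetilde\xi_i>M\}$ are independent with $\p[\delta_i\widetilde\xi_i>M]=\tfrac{2\sigma}{2\sigma+(i-1)}e^{-\alpha M}$, and $\sum_i\tfrac{2\sigma}{2\sigma+(i-1)}=\infty$, so the second Borel--Cantelli lemma shows that infinitely many of these events occur almost surely; hence $\psi_\infty\ge M$ almost surely, and letting $M\to\infty$ gives $\psi_\infty=\infty$ almost surely. (Alternatively (c) follows from (b) and monotonicity, since $\{\psi_\infty\le M\}\subseteq\{\psi_n\le M\}\subseteq\{\psi_n\le r\ln np\}$ for $n$ large.)
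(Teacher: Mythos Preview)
Your proof is correct and follows essentially the same approach as the paper: both establish $K_n/(np)\to 1$ in probability via Chebyshev and \eqref{eq: espvar}, then reduce everything to the classical extreme-value asymptotics for maxima of i.i.d.\ exponentials. The packaging differs only cosmetically---in (a) the paper sandwiches $K_n$ between $(p\pm\varepsilon)n$ and computes the two resulting deterministic limits rather than writing the exact conditional expectation $\E\big[(1-e^{-t}/(np))^{K_n+1}\big]$ and invoking bounded convergence, and in (c) the paper uses only your alternative argument (part (b) plus monotonicity) rather than Borel--Cantelli---but the substance is identical.
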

		\begin{proof}~
			\begin{enumerate}
				\item[a)]  
				Given $t>0$ and $\varepsilon>0$, using Tchebyshev's inequality we have that   
				\begin{equation}\label{eq: orderNn}
					\lim_{n\rightarrow \infty} \p\left[M_0 + \mathcal{B}_n< (p +\varepsilon)n \right]=1 \quad \text{ and } \quad  \lim_{n\rightarrow \infty} \p\left[M_0 + \mathcal{B}_n> (p-\varepsilon) n \right]=1
				\end{equation}
				We can write  
				\begin{align*}
					\lim_{n\rightarrow \infty} \p\left[\psi_{n}\leq \frac{t}{\alpha}+\frac{\ln np}{\alpha} \right] =  	\lim_{n\rightarrow \infty} \p\left[A_n \right],
				\end{align*}
				where
				\begin{align*}
					A_n = \left\{\psi_{n}\leq \frac{t}{\alpha}+\frac{\ln np}{\alpha}; \, (p-\varepsilon) n \leq M_0 + \mathcal{B}_n \leq (p+\varepsilon)n\right\}.
				\end{align*}
				Using the definition of $\psi_n$ we have that
				\begin{align*}
					\p\left[\max_{0\leq i \leq (p+\varepsilon) n} \xi_i \leq  \frac{t}{\alpha}+\frac{\ln np }{\alpha} \right]\leq
					\p\left[A_n \right]
					\leq \p\left[\max_{0\leq i \leq (p-\varepsilon)\ln n} \xi_i \leq  \frac{t}{\alpha}+\frac{\ln np}{\alpha}\right],
				\end{align*}
				and also  
				\begin{align*}
					\p\left[\max_{0\leq i \leq (p-\varepsilon) n} \xi_i \leq \frac{t}{\alpha}+\frac{\ln np}{\alpha} \right] & 
					= \left(1- \frac{e^{-t}}{np}\right)^{(p-\varepsilon)n}	\xrightarrow[n\rightarrow \infty] {}\exp\left[-\frac{(p-\varepsilon)e^{-t}}{p}\right].
				\end{align*}
				Analogously, 
				\begin{align*}
					\p\left[\max_{0\leq i \leq (p+\varepsilon) n} \xi_i \leq \frac{t}{\alpha}+\frac{\ln np}{\alpha} \right] 	\xrightarrow[n\rightarrow \infty]{} \exp\left[-\frac{(p+\varepsilon)e^{-t}}{p}\right],
				\end{align*}
				and letting $\varepsilon$ goes to zero we conclude that 
				\begin{align*}
					\lim_{n\rightarrow \infty} \p\left[\psi_{n}\leq \frac{t}{\alpha}+\frac{\ln np}{\alpha} \right] = \exp(-e^{-t}).
				\end{align*}
				The final observation is that $e^{-e^{-t}}$ corresponds to the cumulative distribution function of a standard Gumbel distribution.
				\item[b)]   
				Using equation \eqref{eq: distpsin} we have that 
				\begin{align*}
					\p\left[\psi_n \leq r\ln np\right] = & \ \p\left[\psi_n \leq r\ln np, \mathcal{B}_n +M_0 > (p-\varepsilon)n  \right]\\
					&  + \p\left[\psi_n \leq r\ln np, \mathcal{B}_n + M_0 \leq (p-\varepsilon)n \right]\\
					\leq & \  \p\left[\sup_{0\leq i \leq (p-\varepsilon)n} \xi_i \leq r\ln np \right]
					+ \p\left[\mathcal{B}_n + M_0 \leq (p-\varepsilon)n \right],
				\end{align*}
				and  thanks to equation \eqref{eq: orderNn} for the case $\varepsilon \in (0,p)$, and the fact that $\alpha r < 1$ by hypothesis, we conclude that
				\begin{align*}
					\p\left[\psi_n \leq r\ln np\right] =  \left(1- \tfrac{1}{(np)^{\alpha r}}\right)^{(p-\varepsilon)n}
					+ \p\left[\mathcal{B}_n + M_0 \leq (p-\varepsilon)n \right] \xrightarrow[n\rightarrow \infty]{} 0.
				\end{align*}
				\item[c)] 
				Given $M\geq 0$ and $r \in (0,\tfrac{1}{\alpha })$, there exist $K\in\NN$ such that  $M\leq r\ln np$ for all $n\geq K$. We have then that
				\begin{align*}
					\lim_{n\rightarrow \infty} \p[\psi_n \leq M] \leq \lim\limits_{n\rightarrow \infty} \p[\psi_{n} \leq r\ln np]=0,
				\end{align*}	
				so $\psi_n\xrightarrow{\p}\infty$. Since $\psi_n\overset{a.s.}{\leq} \psi_{n+1}$, we conclude that $\psi_{n}\xrightarrow{c.s.}\infty$.
			\end{enumerate}
		\end{proof}
		
		Using this properties, we now study the asymptotic behavior of the time to the most recent common ancestor.
		\begin{theorem}
			\label{thm:line}
			Given $(\Pi^n_t)_{t\geq 0}$ the seed-bank coalescent starting with $n$ individuals associated with the Moran lookdown seed-bank model, then  
			\begin{equation*}
				\frac{\widetilde{T}_{MRCA}[n]}{\psi_n}\xrightarrow[n\rightarrow \infty]{\p}  1. 
			\end{equation*}
			Moreover, we have that
			\begin{align*}
				\alpha\left(\widetilde{T}_{MRCA}[n] - \frac{\ln np}{\alpha}\right)\xrightarrow[n\rightarrow \infty]{d} \widetilde{T},
			\end{align*}
			where $\widetilde{T} \sim Gumbel(0,1)$.  
		\end{theorem}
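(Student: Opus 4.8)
The plan is to sandwich $\widetilde{T}_{MRCA}[n]$ between $\psi_n$ and $\psi_n$ plus a remainder of strictly smaller order, and then transfer to $\widetilde{T}_{MRCA}[n]$ the asymptotics of $\psi_n$ supplied by Proposition \ref{prop_ldream_beh}(a).

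\emph{Lower bound.} First I would prove that $\widetilde{T}_{MRCA}[n]\geq \psi_n$ almost surely. Reading the seed-bank coalescent off the lookdown, a block carrying the $d$-flag never participates in a coalescence; by the construction behind \eqref{eq: distpsin}, some ancestral lineage spends a first dormancy period of length $\psi_n$ inside an interval $[s,s+\psi_n]$ with $s\geq 0$, and throughout that interval the corresponding block is present and inactive, so $N_t+M_t\geq 2$ for every $t<\psi_n$. Hence $\widetilde{T}_{MRCA}[n]/\psi_n\geq 1$ and $\alpha(\widetilde{T}_{MRCA}[n]-\ln(np)/\alpha)\geq \alpha(\psi_n-\ln(np)/\alpha)$, so Proposition \ref{prop_ldream_beh}(a) already yields the matching lower bounds for the limiting ratio and for the limiting law.

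\emph{Upper bound.} This is the substantial part. I would write $\widetilde{T}_{MRCA}[n]=\varrho_n+R_n$, with $R_n$ the residual coalescence time after $\varrho_n$. From the analysis of $\varrho_n$ imported from \cite{BGKW2016}, the ``core'' of never-dormant active lineages coalesces Kingman-style (only sped up by losses to the seed-bank), so $\varrho_n=O_{\p}(1)$; and by \eqref{eq: espvar}, \eqref{eq: orderNn} and Tchebyshev's inequality at most $L_n:=M_0+\mathcal{B}_n=pn(1+o_{\p}(1))$ lineages survive to $\varrho_n$. It remains to bound $R_n$. Each surviving lineage completes its first dormancy period by time $\varrho_n+\psi_n$, and from then on it coalesces with the lowest-labelled ancestral line after a delay stochastically dominated by $r^{i}_{k}\sim\exp(kp^{2})$ for the relevant $k$ (Proposition \ref{prop:first_rep_event} and Remark \ref{exp_remarks}), unless it re-enters dormancy first, in which case the argument recurses on the strictly smaller set of lineages that re-enter dormancy (whose cardinality is itself an analogue of $\mathcal{B}$, hence of order $\ln L_n$). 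Iterating, the successive ``seed-bank deposit'' counts shrink along $n\mapsto \ln n\mapsto \ln\ln n\mapsto\cdots$, so the maximal dormancy of round one contributes $\psi_n$, round two contributes $O(\ln\ln n)$, and so on, while the accumulated coalescence delays telescope (via $\sum_m\binom{m}{2}^{-1}<\infty$-type bounds) to $O_{\p}(1)$; this gives $R_n=o_{\p}(\psi_n)$, and for the distributional statement one has to push the same bookkeeping to the sharper $\widetilde{T}_{MRCA}[n]-\psi_n\xrightarrow[n\rightarrow\infty]{\p}0$.

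\emph{Conclusion.} Combining the two bounds gives $\widetilde{T}_{MRCA}[n]/\psi_n\xrightarrow[n\rightarrow\infty]{\p}1$; and since $\alpha(\widetilde{T}_{MRCA}[n]-\psi_n)\xrightarrow[n\rightarrow\infty]{\p}0$, Slutsky's lemma together with Proposition \ref{prop_ldream_beh}(a) yields $\alpha(\widetilde{T}_{MRCA}[n]-\ln(np)/\alpha)\xrightarrow[n\rightarrow\infty]{d}\widetilde{T}\sim \mathrm{Gumbel}(0,1)$. The main obstacle is precisely the upper bound on $R_n$: the lower bound is essentially free, but one must show that once the long dormancy periods are over the leftover coalescence is of lower order (and, for the Gumbel statement, even $o_{\p}(1)$), which is delicate because the number of lineages present at $\varrho_n$ is still of order $n$ rather than ``a few late sleepers''; the recursive decay $n\to\ln n\to\ln\ln n\to\cdots$ of the deposit counts, controlled through the exponential first-reproduction times of Proposition \ref{prop:first_rep_event} / Remark \ref{exp_remarks} and the mean--variance estimates \eqref{eq: espvar}, is what ultimately makes the remainder negligible against $\psi_n\asymp \ln n$.
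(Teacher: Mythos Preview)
Your overall architecture matches the paper's: the lower bound $\psi_n\leq \widetilde T_{MRCA}[n]$ a.s.\ is immediate, and the upper bound proceeds by first burning off the Kingman phase $\varrho_n$ (with $\E[\varrho_n]\leq 2$ from \cite{BGKW2016}) and then controlling what remains after the longest first dormancy. The paper, however, does \emph{not} iterate. It performs a single reduction: using Proposition~\ref{prop:first_rep_event} and Remark~\ref{exp_remarks} it shows that the number $C_n$ of lineages beyond the first $\ln(np)$ that fail to coalesce with one of those first $\ln(np)$ lines by time $\psi_n$ satisfies $\E[C_n]\to 0$, hence $C_n\to 0$ almost surely. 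This yields the clean bound $\widetilde T_{MRCA}[n]\leq \varrho_n+\psi_n+\widetilde T_{MRCA}[2\ln(np)]$ for large $n$, and then the estimate $\E[\widetilde T_{MRCA}[2\ln(np)]]=O(\ln\ln n)$ from \cite[Remark~4.13]{BGKW2016} is invoked as a black box to conclude $(\widetilde T_{MRCA}[n]-\psi_n)/\psi_n\to 0$ in probability. Your recursion $n\mapsto\ln n\mapsto\ln\ln n\mapsto\cdots$ is essentially re-deriving that black-box bound inside the argument; it can be made to work for the ratio statement but is more laborious than the paper's one-shot reduction. (A small slip: with your notation $\widetilde T_{MRCA}[n]=\varrho_n+R_n$, the conclusion you want is $R_n-\psi_n=o_\p(\psi_n)$, not $R_n=o_\p(\psi_n)$.)

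The genuine gap is in your distributional step. You assert that ``the same bookkeeping'' can be pushed to $\widetilde T_{MRCA}[n]-\psi_n\xrightarrow{\p}0$ and then invoke Slutsky. But your own accounting contradicts this: the round-two contribution is the maximal dormancy among the $\sim p\ln n$ lineages surviving round one, which is of order $\alpha^{-1}\ln\ln n$ and diverges. So the remainder after $\psi_n$ is $O_\p(\ln\ln n)$, not $o_\p(1)$, and Slutsky in the additive form you state does not apply. The paper never claims $\widetilde T_{MRCA}[n]-\psi_n\to 0$; for the Gumbel limit it uses only the ratio convergence $\widetilde T_{MRCA}[n]/\psi_n\to 1$ together with the a.s.\ inequality $\psi_n\leq \widetilde T_{MRCA}[n]$: on $\{\widetilde T_{MRCA}[n]/\psi_n\leq 1+\varepsilon\}$ the event $\{\psi_n\leq x/\alpha+\ln(np)/\alpha\}$ forces $\{\widetilde T_{MRCA}[n]\leq (1+\varepsilon)(x/\alpha+\ln(np)/\alpha)\}$, and one then sends $\varepsilon\downarrow 0$. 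To align with the paper you should drop the unsupported $o_\p(1)$ claim and deduce the Gumbel limit from the ratio result instead.
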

		
		\subsubsection{Proof of Theorem \ref{thm:line}}
		Since $\psi_n$ is contained in the seed-bank coalescent, note that $\psi_n \overset{a.s.}{\leq} \widetilde{T}_{MRCA}[n]$. So, it is enough to prove that 
		\begin{align*}
			\frac{\widetilde{T}_{MRCA}[n]-\psi_{n}}{\psi_n}\xrightarrow[n\rightarrow \infty]{\p} 0.
		\end{align*} To this end, we will decompose the interval $[0,\widetilde{T}_{MRCA}[n]]$ in three different stages; the Kingman phase, the largest dream phase and the last coalescent phase. For the Kingman phase we considerate the time until $\varrho_n$, and we know that $\E[\varrho_n] \leq 2$ by $\cite{BGKW2016}$.
		
		On the other hand, let's analyze the number of lineages after the \textit{largest dormant phase}, the time interval between $\varrho_n$ and $\psi_n$.  Let $C_n$ be the number of lineages that did not coalesce with the first  $\ln (np)$ lineages before $\psi_n$. We will show that $C_n$ goes to zero almost surely when $n$ goes to infinity. 
		In fact, using Remark \ref{exp_remarks}, we have that 
		\begin{align*}
			C_n \coloneqq \sum_{i=\ln np}^{M_0 +\mathcal{B}_n} \I_{\left\{ \psi_n < \tau_{\ln np}^i\right\}}  \leq 
			\sum_{i=\ln np}^{M_0 + \mathcal{B}_n} \I_{\left\{ \psi_n < r_{\ln np}^i\right\}}, 
		\end{align*}
		and recalling definition \eqref{eq: distpsin} , taking conditional expectation with respect $\{M_0 + \mathcal{B}_n=m\}$ for any $m>\ln np$ we have that
		\begin{align*}
			\E&\left[\sum_{i=\ln np}^{M_0 + \mathcal{B}_n} \I_{\{\psi_{n}<r^i_{\ln np}\}}\Bigg|M_0 + \mathcal{B}_n=m\right] = (m-\ln np)\E\left[\I_{\{\psi_{n}<r^i_{\ln np}\}}\Bigg|M_0 + \mathcal{B}_n=m\right] \\
			& \leq  m\p\left[\max_{1\leq j\leq m} \xi_j< r_{\ln np}^i \right]
			= m\int_{0}^{\infty} \p\left[z<r_{\ln np}^i\right]m(1-e^{-\alpha z})^{m-1}\alpha e^{-\alpha z} dz\\
			& \leq m\int_{0}^{\infty} e^{-(\ln np) p^2z }(1-e^{-\alpha z})^{m-1}\alpha m e^{-\alpha z} dz
			= m\int_{0}^{\infty}  (np)^{-p^2z}(1-e^{-\alpha z})^{m-1}\alpha m e^{-\alpha z} dz \\
			& \leq m\E\left[(np)^{-p^2\psi_{n}}\Big|M_0 + \mathcal{B}_n=m\right].
		\end{align*}
		Now, using $L_2$-Cauchy inequality and the fact that $M_0 + \mathcal{B}_n \geq (p-\varepsilon)n > \ln np $ almost surely for $n$ sufficiently large by equation \eqref{eq: orderNn}, we have that 
		\begin{align*}
			\E[C_n] & \leq \E\left[\sum_{i=\ln np}^{M_0+ \mathcal{B}_n} \I_{\{\psi_{n}<r^i_{\ln np}\}}\right]
			\leq \E\left[(M_0 +\mathcal{B}_n)\E\left[(np)^{-p^2\psi_{n}}\Big|M_0 + \mathcal{B}_n\right]\right] \\
			& \leq  \E\left[\E\left[(M_0 + \mathcal{B}_n )(np)^{-p^2\psi_{n}}\Big|M_0 + \mathcal{B}_n\right]\right] 
			= \E\left[(M_0 +\mathcal{B}_n) (np)^{-p^2\psi_{n}}\right]\\
			&	\leq \left(\mathbb{V}(M_0 +\mathcal{B}_n)+\E^2[M_0 +\mathcal{B}_n]\right)^\frac{1}{2}\E\left[(np)^{-2p^2\psi_{n}}\right]^\frac{1}{2}\\
			& \leq  \left(\E[M_0 +\mathcal{B}_n]+\E^2[M_0 +\mathcal{B}_n]\right)^\frac{1}{2}\E\left[(np)^{-2p^2\psi_{n}}\right]^\frac{1}{2}\\
			& \leq  \left(np +2\sigma\ln n +R(\sigma,n)+\left(np + 2\sigma\ln n +R(\sigma,n)\right)^2\right)^\frac{1}{2}\E\left[(np)^{-2p^2\psi_{n}}\right]^\frac{1}{2},
		\end{align*}
		where in the last line we use equation \eqref{eq: espvar}.
		
		As $\{\psi_n\}_{n \in \NN}$ diverges almost surely, 
		$\lim\limits_{n\rightarrow \infty}\E[(np)^{k-2p^2\psi_{n}}]=0$ for any $k\geq 0$, so we can deduce that 
		$\lim\limits_{n\rightarrow 0} \E[C_n] = 0.$
		Moreover, as $C_n$ is non-negative, we conclude that
		$	\lim\limits_{n\rightarrow \infty} C_n \overset{a.s.}{=} 0.$
		As we can bound $\widetilde{T}_{MRCA}[n]$ by
		\begin{align*}
			\widetilde{T}_{MRCA}[n] \overset{c.s.}{\leq} \varrho_n+\psi_n+\widetilde{T}_{MRCA}\left[\ln np + C_n\right] \overset{c.s.}{\leq} \varrho_n+\psi_n+\widetilde{T}_{MRCA}\left[2\ln np\right]
		\end{align*} 
		for $n$ large enough,  we have that
		\begin{equation}\label{eq: boundlim}
			\frac{\widetilde{T}_{MRCA}[n]-\psi_n}{\psi_n}\overset{a.s.}{\leq} \frac{\varrho_n+\widetilde{T}_{MRCA}[2\ln np]}{\psi_n}.
		\end{equation}
		For any $\varepsilon>0$ and $r \in (0, \tfrac{1}{\alpha})$ we have 
		\begin{align*}
			\p\left[\frac{\widetilde{T}_{MRCA}[2\ln(np)]}{\psi_{n}}> \varepsilon \right]
			= & \p\left[\widetilde{T}_{MRCA}[2\ln(np)]> \varepsilon \psi_{n},\, \psi_{n} \leq r\ln(np) \right]\\
			&+\p\left[\widetilde{T}_{MRCA}[2\ln(np)]> \varepsilon \psi_{n},\, \psi_{n} > r\ln(np)) \right]\\
			\leq &\p\left[\psi_{n} \leq r\ln(np) \right]  + \p\left[\widetilde{T}_{MRCA}[2\ln(np)]> \varepsilon r\ln(np) \right]\\
			\leq & \p\left[\psi_{n} \leq r\ln(np) \right] +\frac{\E\left[\widetilde{T}_{MRCA}[2\ln(np)]\right]}{\varepsilon r\ln(np)}.
		\end{align*} 
		First term goes to zero according with Proposition \eqref{prop_ldream_beh}. We will show that the second term also goes to zero via the generalized dominated convergence theorem. By \cite[Remark 4.13]{BGKW2016} we know that
		\begin{align}
			\label{eq_limsup_tmrca}
			\limsup_{n\rightarrow \infty} \frac{\E\left[\widetilde{T}_{MRCA}[2\ln(np))]\right]}{\ln\left[2\ln (np)\right]}<\infty,
		\end{align}
		which implies that
		\begin{align*}
			\lim_{n\rightarrow\infty} \frac{\E\left[\widetilde{T}_{MRCA}[2\ln(np)]\right]}{\varepsilon r\ln(np)}=0.
		\end{align*}
		
		Finally, from \eqref{eq: boundlim} we conclude that
		\begin{align*}
			\frac{\widetilde{T}_{MRCA}[n]-\psi_n}{\psi_n}\xrightarrow[n\rightarrow \infty]{\p} 0.
		\end{align*}
		Moreover, using  the asymptotic distribution given by \eqref{eq: adpsi}, we can write for any $\varepsilon>0$
		\begin{equation*}
			\begin{split}
				\exp \left(-e^{-x}\right)=& \lim\limits_{n\rightarrow \infty} \p\left[\psi_n \leq \frac{x}{\alpha} + \frac{\ln(np)}{\alpha}\right]\\
				=& \lim\limits_{n\rightarrow \infty}\p\left[\psi_n \leq \frac{x}{\alpha} + \frac{\ln(np)}{\alpha}, \frac{\widetilde{T}_{MRCA}[n]}{\psi_n}>\varepsilon +1\right] \\&+ \lim\limits_{n\rightarrow \infty}\p\left[\psi_n \leq \frac{x}{\alpha} + \frac{\ln(np)}{\alpha}, \frac{\widetilde{T}_{MRCA}[n]}{\psi_n}\leq \varepsilon +1\right] \\
				\leq& \lim\limits_{n\rightarrow \infty}  \p\left[\widetilde{T}_{MRCA}[n]\leq (\varepsilon +1)\left(\frac{x}{\alpha} + \frac{\ln(np)}{\alpha}\right)\right],
			\end{split}
		\end{equation*} 
		and we can take $\varepsilon \rightarrow 0$ to obtain 
		\begin{align*}
			\lim\limits_{n\rightarrow \infty}  \p\left[\alpha \left(\widetilde{T}_{MRCA}[n] -\frac{\ln(np)}{\alpha}\right) \leq x\right]
			\geq \exp\left(-e^{-x}\right).
		\end{align*}
		In the other hand, since $\psi_n\overset{a.s.}{\leq} \hat{T}_{MRCA}(n)$ we have that
		\begin{align*}
			\lim\limits_{n \rightarrow \infty}  \p\left[\widetilde{T}_{MRCA}[n] \leq \frac{x}{\alpha} + \frac{\ln(np)}{\alpha}\right] \leq \lim\limits_{n \rightarrow \infty}  \p\left[\psi_n \leq \frac{x}{\alpha} + \frac{\ln(np)}{\alpha}\right] = \exp(-e^{-x}),
		\end{align*}
		and we conclude the desired convergence in distribution.

		\hfill \qedsymbol

		\section{Fixation time of a type in the SB-lookdown} \label{sec : fixtime}
		Suppose we start with an initial exchangeable distribution $\left(Z^N_1(0),\ldots,Z^N_N(0)\right)$ which sets at most one individual per type. Let $\widetilde{\tau}^N$ be the fixation time of type in the first level for the $N$-lookdown model, given by
		\begin{equation}
			\widetilde{\tau}^N \coloneqq \inf\left\{t\geq 0 \,:\, Z^{N,E}_i(t) = Z^{N,E}_1(0), \, i \in \{1,2,\cdots,N\}\right\}.
		\end{equation}
		If we study the the time to the most common recent ancestor $\widetilde{T}_{MCRA}[N]$ from a fixed initial time $\zeta \geq 0$, we see that the will only have the two following scenarios.
		\begin{enumerate}
			\item {\bf Scenario 1.} If $\widetilde{T}_{MCRA}[N]\leq  \zeta$, this implies that $(\zeta-\widetilde{T}_{MCRA}[N], \zeta)\subseteq (0,\zeta)$, and the fixation time $\widetilde{\tau}^N$ occurs also before $\zeta$, since all the individual already has coalesced with the individual in the first level. 
			\item {\bf Scenario 2.} If $\widetilde{T}_{MCRA}[N]>  \zeta$, this implies that $(0,\zeta) \subset (\zeta-\widetilde{T}_{MCRA}[N], \zeta)$ and the fixation time $\widetilde{\tau}^N$ is bigger than $\zeta$, because at time zero the individual in the first level has not yet coalesced  with all the $N-1$ remaining individuals.
		\end{enumerate}

		From before, we deduce that 
		$\p\left[ \widetilde{\tau}^N \leq \zeta \right] \leq \p\left[\widetilde{T}_{MCRA}[N] \leq \zeta \right]$ and  $\p\left[\widetilde{T}_{MCRA}[N] \leq \zeta \right] \leq \p\left[ \widetilde{\tau}^N \leq \zeta \right]$, 
		so 
		\begin{align*}
			\widetilde{\tau}^N \overset{d}{=}\widetilde{T}_{MCRA}[N] .
		\end{align*}
		Using this equality in law, we obtain the asymptotic distribution of the fixation time as a consequence of Theorem \ref{thm:line}. 
		\begin{corollary}\label{cor:fixation_time}
			Given  $\widetilde{\tau}^N$ the fixation time of the type in the first level for the $N$-lookdown model, it holds that
			\begin{align*}
				\alpha\left(\widetilde{\tau}^N - \frac{\ln Np}{\alpha}\right)\xrightarrow[N\rightarrow \infty]{d} \widetilde{T},
			\end{align*}
			where $\widetilde{T} \sim Gumbel(0,1)$.  
		\end{corollary}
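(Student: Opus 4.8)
The plan is to reduce the statement entirely to Theorem \ref{thm:line} through the distributional identity $\widetilde{\tau}^N \overset{d}{=}\widetilde{T}_{MRCA}[N]$ that precedes the corollary, and then simply push the Gumbel limit through. Concretely, I would first make the dichotomy rigorous. Fix an arbitrary $\zeta\ge 0$ and read the whole size-$N$ population at time $\zeta$. The crucial structural remark is that in the SB-lookdown the ancestral lineage of level $1$ is \emph{constant}: level $1$ can never look down to a lower level, so $g(1,t)=f(1)$ for all $t$ and the block containing $1$ always contains $1$. Hence, when the backward process (started from time $\zeta$) reaches its MRCA, the single surviving block is the one carrying level $1$, i.e. the universal ancestor realized at the MRCA is the level-$1$ individual, whose type is $Z^{N,E}_1(0)$. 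Any reproduction event occurring in the forward interval $(\zeta-\widetilde{T}_{MRCA}[N],\zeta)$ is then between two lineages whose types are copies of that same $Z^{N,E}_1(0)$, and no competing type can re-enter since types only propagate downward; therefore the population is monochromatic of type $Z^{N,E}_1(0)$ from $\zeta$ on. This gives Scenario 1, and Scenario 2 is the contrapositive. So $\{\widetilde{\tau}^N\le\zeta\}=\{\widetilde{T}_{MRCA}[N]\le\zeta\}$ (with the MRCA read back from $\zeta$), for every $\zeta$.

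\textbf{Matching the hypotheses of Theorem \ref{thm:line}.} Next I would check that the backward object read from $\zeta$ has exactly the law appearing in Theorem \ref{thm:line}. The sample is the entire population of $N$ levels observed at the fixed time $\zeta$; since, as noted in Section \ref{sec: SBlookdown}, each level's state process runs autonomously from its stationary law, the marks at time $\zeta$ are i.i.d. with $\p[\text{active}]=p=\alpha/(\alpha+\sigma)$, and this holds for \emph{every} $\zeta$. By the ancestry analysis of Section \ref{sec: TMRCA}, the backward process from this stationary marked initial condition is the seed-bank $N$-coalescent, and $N_0+M_0=N$. Hence $\widetilde{T}_{MRCA}[N]$ read back from $\zeta$ is distributed as the quantity in Theorem \ref{thm:line}, independently of $\zeta$. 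Combining with the dichotomy, $\p[\widetilde{\tau}^N\le\zeta]=\p[\widetilde{T}_{MRCA}[N]\le\zeta]$ for all $\zeta\ge 0$, i.e. $\widetilde{\tau}^N\overset{d}{=}\widetilde{T}_{MRCA}[N]$.

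\textbf{Conclusion.} It then remains to apply Theorem \ref{thm:line} with $n=N$, which yields $\alpha\big(\widetilde{T}_{MRCA}[N]-\tfrac{\ln Np}{\alpha}\big)\xrightarrow[N\to\infty]{d}\widetilde{T}\sim Gumbel(0,1)$, and to transport this convergence through the equality in law to obtain the claimed limit for $\widetilde{\tau}^N$. I expect the only genuinely delicate point to be the careful justification of the dichotomy, namely that coalescence of all $N$ lineages to the MRCA is \emph{equivalent} to fixation of the level-$1$ type — which rests on the two observations that level $1$'s lineage is immovable and that no type other than $Z^{N,E}_1(0)$ can reappear once the common ancestor has been reached — together with the time-shift invariance of the stationary marking that makes the choice of $\zeta$ immaterial. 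Everything past that is a direct invocation of Theorem \ref{thm:line}.
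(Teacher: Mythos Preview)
Your proposal is correct and follows essentially the same approach as the paper: establish the dichotomy that $\{\widetilde{\tau}^N\le\zeta\}$ coincides with $\{\widetilde{T}_{MRCA}[N]\le\zeta\}$ (using that level~$1$ never looks down, so the MRCA is always the level-$1$ lineage), deduce $\widetilde{\tau}^N\overset{d}{=}\widetilde{T}_{MRCA}[N]$, and then invoke Theorem~\ref{thm:line}. If anything, you spell out the justification of the two scenarios and the stationarity of the state marking more carefully than the paper does, but the route is identical.
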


		 \textbf{Acknowledgments.} This project was supported by the grant PAPIIT UNAM IN101722 “Nuevas aplicaciones de la dualidad de momentos y de la construcci\'on Lookdown”

	\end{document}